\documentclass[12pt]{article}
\RequirePackage{amssymb,amsmath,amscd,amsthm}
\usepackage{enumitem}
\usepackage{url}
\usepackage{authblk}
\usepackage{fancyhdr}
\newtheorem{thm}{Theorem}[section]

\newtheorem{cor}[thm]{Corollary}
\newtheorem{lemma}[thm]{Lemma}
\newtheorem{rem}[thm]{Remark}
\DeclareMathOperator{\cod}{cod}
\DeclareMathOperator{\Irr}{Irr}

\DeclareMathOperator{\cd}{cd}

\newtheorem*{Main theorem}{Main theorem}
\begin{document}
	
	\title{On $p$-groups with one  nonlinear codegree }
	\pagestyle{fancy}
	\fancyhf{}                     
	\fancyhead[C]{Oone  nonlinear codegree} 
	\fancyfoot[C]{\thepage}  
	\renewcommand{\headrulewidth}{0pt}  
	
	\author
	[a]{Jiwei Zheng}
	\author
	[a]{Heng Lv\thanks{Corresponding author:
			\texttt
			{lvh529@163.com}}}
	{
		\affil
		[a]{School of Mathematics and Statistics,
			Southwest University,
			Chongqing 400715, P.R. China}
		\maketitle	
\begin{abstract}
	For an irreducible character $\chi$ of a finite group $G$, its codegree is
	defined by $\cod(\chi)=|G:\ker\chi|/\chi(1)$. In this paper, we characterize fnite $p$-groups whose nonlinear irreducible characters all have the same codegree.
\end{abstract}
\noindent\textbf{Mathematics Subject Classification (2020):} 20C15.

\smallskip
\noindent\textbf{Keywords:} character codegrees; finite $p$-groups;
nilpotence class.

\section{Introduction}

Let $G$ be a finite group, and let $\Irr(G)$ denote the set of its
irreducible complex characters. Following Qian et al.
\cite{qian2007co}, the codegree of $\chi\in\Irr(G)$ is defined by
$$
\cod(\chi)=\frac{|G:\ker\chi|}{\chi(1)}.
$$
We write $\cod(G)=\{\cod(\chi)|\chi\in\Irr(G)\}$. Restrictions on character codegrees therefore provide a way
to relate character-theoretic information to the structure of finite
groups. In particular, Alizadeh et al.~\cite{alizadeh2019groups}
characterized finite groups with at most three character codegrees. Then Liu and Yang \cite{liu2021nonsolvable4,liu2021nonsolvable5} determined the nonsolvable finite groups with exactly four or five character codegrees.

For finite $p$-groups, an important aspect of this problem is the relation
between character codegrees and nilpotency class. Du and Lewis
\cite{DuLewis2016} obtained bounds on the nilpotency class in terms of the
largest character codegree. Finite $p$-groups with exactly three character
codegrees have nilpotency class at most $2$, and Li and Qu
\cite{Li02102024} characterized these groups. Croome and Lewis
\cite{croome2020} investigated $p$-groups with exactly four character
codegrees and established bounds on their nilpotency class under
additional hypotheses.

Instead of restricting the size of $\cod(G)$, one can impose conditions
only on the codegrees of nonlinear irreducible characters. For a normal
subgroup $N$ of $G$, we identify $\Irr(G/N)$ with the irreducible characters
of $G$ whose kernels contain $N$, and put
$$
\Irr(G| N)=\Irr(G)\setminus\Irr(G/N),
\qquad
\cod(G| N)=\{\cod(\chi)|\chi\in\Irr(G| N)\}.
$$
Thus $\cod(G| G')$ consists precisely of the codegrees of nonlinear irreducible
characters of $G$. Qian and Zeng \cite{qian2023finite} characterized the
non-nilpotent finite groups satisfying $|\cod(G| G')|=1$. $c(G)$ denotes the nilpotence class of $G$. In this paper, prove the following
result.
\begin{Main theorem}\label{main}
	Let $G$ be a nonabelian $p$-group. Then $|\cod(G|G')|=\{p^a\}$ if and only if
	$c(G)=2$, $G'$ is elementary abelian, and the following two conditions hold
	for every maximal subgroup $N$ of $G'$, where $Z(G/N)=R_N/N$,
	$|G:R_N|=p^{2d_N}$ and $\exp(R_N/N)=p^{e_N}$:
	\begin{enumerate}[label=\textup{(\roman*)}]
		\item $G'/N\leq \mho_{e-1}(Z(G/N))$;
		\item $d_N+e_N=a$ for an integer $a$ which does not depend on $N$.
	\end{enumerate}
	
\end{Main theorem}

\section{Proof of main Theorem}
In the proof of the main theorem, we  need only the following  two basic result on characters.

\begin{lemma}[\cite{isaacs1994character}, Corollary 2.30]\label{230}
Let $\chi\in\Irr(G)$. Then $\chi(1)^2\leq|G:Z(\chi)|$. Equality occurs iff $\chi$ vanishes on $G-Z(\chi)$.
\end{lemma}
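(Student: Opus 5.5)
The plan is to compare the norm $[\chi,\chi]=1$ with the portion of the defining sum that comes from $Z(\chi)$ alone. Recall that $Z(\chi)=\{g\in G : |\chi(g)|=\chi(1)\}$. If $\mathfrak{X}$ is a representation affording $\chi$, this is exactly the set of elements $g$ for which $\mathfrak{X}(g)$ is a scalar matrix. I will take this description, and the fact that $Z(\chi)$ is a subgroup of $G$, as the standard background. Consequently $\chi_{Z(\chi)}=\chi(1)\lambda$ for a linear character $\lambda$ of $Z(\chi)$. In particular $|\chi(g)|^2=\chi(1)^2$ for every $g\in Z(\chi)$.

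The key step is the inequality coming from the first orthogonality relation. Since $\chi$ is irreducible,
$$
1=[\chi,\chi]=\frac{1}{|G|}\sum_{g\in G}|\chi(g)|^2\;\ge\;\frac{1}{|G|}\sum_{g\in Z(\chi)}|\chi(g)|^2=\frac{|Z(\chi)|\,\chi(1)^2}{|G|}.
$$
The inequality holds because we have discarded the terms indexed by $G-Z(\chi)$, and each of these is nonnegative. Rearranging gives $\chi(1)^2\le |G:Z(\chi)|$.

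For the equality statement, note that equality holds in the display precisely when every discarded term is zero. Since each term $|\chi(g)|^2$ is nonnegative, this means $|\chi(g)|^2=0$, that is $\chi(g)=0$, for all $g\in G-Z(\chi)$. Hence $\chi(1)^2=|G:Z(\chi)|$ if and only if $\chi$ vanishes on $G-Z(\chi)$.

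There is no real obstacle here. The only point requiring care is the identification $|\chi(g)|=\chi(1)$ on $Z(\chi)$, i.e.\ that $\mathfrak{X}(g)$ is scalar there. This follows because $\chi(g)$ is a sum of $\chi(1)$ roots of unity, and such a sum has absolute value $\chi(1)$ only when all the roots are equal. Since $\mathfrak{X}(g)$ is diagonalizable, all its eigenvalues then coincide, so $\mathfrak{X}(g)$ is scalar.
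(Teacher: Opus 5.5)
Your proof is correct and is essentially the standard argument. The paper gives no proof of its own and cites Isaacs, who obtains Corollary 2.30 from Lemma 2.29 ($[\chi_H,\chi_H]\le|G:H|[\chi,\chi]$, with equality iff $\chi$ vanishes on $G-H$) applied to $H=Z(\chi)$ with $\chi_{Z(\chi)}=\chi(1)\lambda$; that is exactly the truncated-norm comparison you carry out directly.
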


\begin{lemma}[\cite{isaacs1994character}, Theorem 2.31]\label{231}
	Suppose that $\chi\in\Irr(G)$ and that $G/Z(\chi)$ is abelian. Then $|G:Z(\chi)|=\chi(1)^2$.
\end{lemma}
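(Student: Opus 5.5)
The statement to prove is Lemma~\ref{231}: if $\chi\in\Irr(G)$ and $G/Z(\chi)$ is abelian, then $|G:Z(\chi)|=\chi(1)^2$. By Lemma~\ref{230} we already have $\chi(1)^2\leq|G:Z(\chi)|$, with equality exactly when $\chi$ vanishes on $G-Z(\chi)$. So the plan is to show that $\chi(g)=0$ for every $g\in G-Z(\chi)$, and then apply the equality case of Lemma~\ref{230}.

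Fix $g\notin Z(\chi)$. Recall that $Z(\chi)$ is the set of $x\in G$ with $|\chi(x)|=\chi(1)$. If $\mathfrak X$ is a representation affording $\chi$, then $\mathfrak X(x)$ is a scalar matrix $\lambda(x)I$ for $x\in Z(\chi)$, and $\lambda$ is a linear character of $Z(\chi)$; this uses Schur's lemma in the usual way. The key step is to produce $z\in Z(\chi)$ with $\lambda(z)\neq 1$ and $z\in[g,G]$. I will obtain it as a commutator: since $g\notin Z(\chi)$, and $Z(\chi)/\ker\chi=Z(G/\ker\chi)$, the coset $g\ker\chi$ is not central in $G/\ker\chi$. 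Hence there is $h\in G$ with $z:=[g,h]\notin\ker\chi$.

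Because $G/Z(\chi)$ is abelian, $G'\leq Z(\chi)$, so $z\in Z(\chi)$. Also $z\notin\ker\chi$, which forces $\lambda(z)\neq1$, since $\ker\chi=\ker\lambda$ inside $Z(\chi)$. Now $h^{-1}gh=g[g,h]=gz$, so
$$\chi(g)=\chi(h^{-1}gh)=\chi(gz),$$
by constancy of $\chi$ on conjugacy classes. On the other hand, $\mathfrak X(gz)=\mathfrak X(g)\lambda(z)$, so $\chi(gz)=\lambda(z)\chi(g)$. Combining these gives $(1-\lambda(z))\chi(g)=0$, and hence $\chi(g)=0$.

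The only delicate point is justifying $Z(\chi)/\ker\chi=Z(G/\ker\chi)$, or more directly that a non-$\chi$-central $g$ has a commutator outside $\ker\chi$. I would argue this directly. Suppose $[g,h]\in\ker\chi$ for all $h$. Then $\mathfrak X(g)$ commutes with every $\mathfrak X(h)$, because $\mathfrak X(h)^{-1}\mathfrak X(g)\mathfrak X(h)=\mathfrak X(g)\mathfrak X([g,h])=\mathfrak X(g)$. By Schur's lemma $\mathfrak X(g)$ is then scalar, so $g\in Z(\chi)$, a contradiction. With this step in place, the vanishing follows, and Lemma~\ref{230} yields the equality $|G:Z(\chi)|=\chi(1)^2$.
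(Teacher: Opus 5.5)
Your proof is correct and is essentially Isaacs' own proof of Theorem 2.31, which the paper cites without reproducing: choose $h$ with $z=[g,h]\in Z(\chi)\setminus\ker\chi$, use $\chi(g)=\chi(gz)=\lambda(z)\chi(g)$ to show $\chi$ vanishes off $Z(\chi)$, and conclude with Lemma~\ref{230}. One small correction: $\mathfrak X(x)$ is scalar for $x\in Z(\chi)$ because of the equality case of the triangle inequality applied to the eigenvalues of $\mathfrak X(x)$ (Isaacs, Lemma 2.27), not because of Schur's lemma, which is needed only in your later step showing that some commutator $[g,h]$ lies outside $\ker\chi$.
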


For a
$p$-group $G$, we write $G_i$ for the $i$th term of the lower central series of $G$, defined by $G_1=G$ and $G_{i+1}=[G_i,G]$. And  we write $\mho_t(G)=\{x^{p^t}| x\in G\}$.
\begin{lemma}\label{cod2}
Let $G$ be a $p$-group such that $|G_3|=p$ and $Z(G)$ is cyclic. 
Then $|\cod(G| G')|\geq 2$.
\end{lemma}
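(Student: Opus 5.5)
The plan is to compare the codegree of a faithful irreducible character of $G$ with the codegree of a nonlinear character of a proper quotient, and to show that the two cannot agree.

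Since $G$ is a $p$-group with $Z(G)$ cyclic, $G$ has a unique minimal normal subgroup, and $G$ has a faithful irreducible character. The subgroup $G_3$ has order $p$ and is normal, so it is this unique minimal normal subgroup. Hence $G_3\le Z(G)$. Moreover every $\chi\in\Irr(G)$ is either faithful or contains $G_3$ in its kernel, so $\Irr(G|G_3)$ is exactly the set of faithful irreducible characters.

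Suppose, aiming for a contradiction, that $\cod(G|G')=\{m\}$. Every faithful $\chi$ is nonlinear, because $G$ is nonabelian, so $m=|G|/\chi(1)$. Thus all characters in $\Irr(G|G_3)$ have a common degree $f=|G|/m$. Now take any nonlinear $\psi\in\Irr(G/G_3)$ with kernel $K\ge G_3$. The equation $\cod(\psi)=m$ rewrites as
$$\psi(1)\,|K|=f .$$
Such $\psi$ exist because $G/G_3$ is nonabelian: $G'>G_3$, since $G_3=[G',G]<G'$ in a nilpotent group. Since $G/K$ has cyclic centre $Z(\psi)/K$, class at most $2$, and $\psi$ is faithful on $G/K$, Lemma~\ref{231} gives $\psi(1)^2=|G:Z(\psi)|$.

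Next I will extract two inequalities on $f$ and play them against each other.

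\textbf{Upper bound.} Lemma~\ref{230}, applied to a faithful $\chi$ (so $Z(\chi)=Z(G)$), gives $f^2\le|G:Z(G)|$. The counting identity
$$\sum_{\chi\in\Irr(G|G_3)}\chi(1)^2=|G|-|G:G_3|=|G|\Bigl(1-\tfrac1p\Bigr),$$
together with the equal-degree conclusion, says that $f^2$ divides $|G|(p-1)/p$. Since $f$ is a power of $p$, this gives $f^2\le|G|/p$.

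\textbf{Lower bound.} The quotient $G/Z(G)$ is nonabelian because $c(G)\ge3$. Choosing $\psi\in\Irr(G/Z(G))$ nonlinear gives $K\ge Z(G)$, hence
$$f=\psi(1)|K|\ge p\,|Z(G)|.$$
Choosing instead $\psi$ with $K$ as small as possible gives a second relation. Here I would take $\psi$ lying over a nonlinear character of $G/G_3$ that is nontrivial on $G'/G_3$, so that $K\cap G'$ is small.

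The main obstacle is turning these relations into an outright contradiction. The idea is that $f=\psi(1)|K|$ must hold simultaneously for a $\psi$ with large kernel (containing $Z(G)$) and for one with small kernel, while $\psi(1)^2=|G:Z(\psi)|$ ties the degree to the centre of $G/K$. I would first try to show that two such choices give different values of $\psi(1)|K|$. This could use the fact that $G'/G_3\le Z(G/G_3)$, so $G'\le Z(\psi)$ for every $\psi\in\Irr(G/G_3)$, which forces $\psi(1)^2\le|G:G'|$ uniformly. If that comparison is not sharp enough, the fallback is to use the vanishing criterion in Lemma~\ref{230}. Equality $f^2=|G:Z(G)|$ would make each faithful $\chi$ vanish off $Z(G)$. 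That would make $G/Z(G)$ behave like a group of central type with respect to $Z(G)$, which I expect to be incompatible with $[G',G]=G_3\ne1$ being contained in $Z(G)$ while $G'\not\le Z(G)$.
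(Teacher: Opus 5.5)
There is a genuine gap: the proposal sets up the right comparison but never produces the contradiction. Your reduction is sound and is the one the paper uses. Faithful $\chi$ give $f^2\le|G:Z(G)|$ by Lemma~\ref{230}. Every nonlinear $\psi\in\Irr(G/G_3)$ with kernel $K$ gives $f^2=\psi(1)^2|K|^2=|G:Z(\psi)|\,|K|^2$ by Lemma~\ref{231}. A contradiction therefore needs exactly the inequality $|Z(\psi):K|<|K|\,|Z(G)|$ for some such $\psi$, and nothing in your proposal bounds $|Z(\psi):K|$ from above.

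Your available bounds do not get there. The bound $f\ge p|Z(G)|$ only yields $|G|\ge p^2|Z(G)|^3$, which is not contradictory. The counting bound $f^2\le|G|/p$ is weaker than $f^2\le|G:Z(G)|$. Neither finishing route you sketch helps:
\begin{itemize}
\item Comparing two nonlinear characters of $G/G_3$ with each other cannot work in general, since $G/G_3$ may itself have a single nonlinear codegree. Already for $G=D_{16}$ the quotient $G/G_3\cong D_8$ has only one nonlinear character. So the contradiction must involve the faithful characters.
\item The vanishing criterion addresses only the equality case $f^2=|G:Z(G)|$. It leaves $f^2<|G:Z(G)|$ open, and it is stated only as an expectation.
\end{itemize}

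The missing idea is to tie $\exp Z(G/G_3)$ to $|Z(G)|$. Put $p^z=\exp Z(G/G_3)$. If $xG_3\in Z(G/G_3)$, then $[x,g]\in G_3\le Z(G)$, so $[x^p,g]=[x,g]^p=1$. Thus $\mho_1(Z(G/G_3))\le Z(G)/G_3$, which is cyclic, whence $|Z(G)|\ge p^z$.

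Next write $Z(\psi)=\langle w\rangle K$, pass to $\bar G=G/G_3$, and let $p^t=\exp(\bar K\cap\bar G')$. Each $[\bar w,\bar g]$ lies in $\bar K\cap\bar G'$ and is central. Hence $\bar w^{p^t}\in Z(\bar G)$ and $\bar w^{p^{t+z}}=1$. Therefore $|Z(\psi):K|=o(wK)\le p^{t+z}\le p^{z-1}|K|<|K|\,|Z(G)|$.

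This gives $f^2>|G:Z(G)|$ for every nonlinear $\psi\in\Irr(G/G_3)$, contradicting your upper bound. The paper phrases the same comparison as $\cod(\chi)^2\ge p^z|G|>p^{z-1}|G|\ge\cod(\psi)^2$.
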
	
	\begin{proof}
		Let $\chi\in \Irr(G)$ be faithful and $\psi\in \Irr(G/G_3)$ be nonlinear. Write $\exp(Z(G/G_3))=p^z$ and $K=\ker\psi$.
		
		\smallskip
		\noindent\emph{Step 1.} $\cod(\chi)^2\geq p^z|G|$.
		Let $M/G_3=Z(G/G_3)$. For $x\in M$ and $g\in G$, we have
		$[x,g]\in G_3\leq Z(G)$, and hence $[x^p,g]=[x,g]^p=1$. Thus $\mho_1(M/G_3)\subseteq Z(G)/G_3$ which is cyclic. Then $|Z(G)/G_3|\geq \exp(\mho_1(M/G_3))=p^{z-1}$. Hence, $|Z(G)|\geq p^z$. By Lemma \ref{230}, we have
		\[\cod(\chi)^2=\frac{|G|^2}{\chi(1)^2}\geq |G|\,|Z(G)|\geq p^z|G| .
		\]

		\noindent\emph{Step 2.} $\cod(\psi)^2\leq p^{z-1}|G|$.
		Let $\bar G=G/G_3$. Since $Z(\psi)/K=Z(G/K)$ is
		cyclic, there exists $w\in Z(\psi)$ such that $Z(\psi)=\langle w\rangle K$.
		For every $g\in G$, we have $[\bar w,\bar g]\in\bar K\cap\bar G'$, and
		$[\bar w,\bar g]$ is central in $\bar G$. Let
		$p^t=\exp(\bar K\cap\bar G')$. Then
		$[\bar w^{\,p^t},\bar g]=[\bar w,\bar g]^{p^t}=1$ for all $g\in G$, that is,
		$\bar w^{\,p^t}\in Z(\bar G)$. Hence $\bar w^{\,p^{t+z}}=1$, so that
		$w^{p^{t+z}}\in G_3\leq K$. Then
		\[
		|Z(\psi):K|=o(wK)\leq p^{t+z}\leq p^z|\bar K|=p^{z-1}|K| .
		\]
		By Lemma \ref{231}, $\psi(1)^2=|G:Z(\psi)|$. Thus we have
		\[\cod(\psi)^2=\frac{|G:K|^2}{\psi(1)^2}=|G:K||Z(\psi):K|\leq p^{z-1}|G|.
		\]
		
		Therefore, $\cod(\chi)\geq p\cod(\psi)$. Hence $|\cod(G| G')|\geq 2$.

	\end{proof}

\begin{cor}\label{cG2}
	Let $G$ be a $p$-group such that $|\cod(G| G')|=1$. Then $c(G)=2$.
\end{cor}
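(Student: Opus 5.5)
We argue by contradiction, using a minimal counterexample and reducing to Lemma~\ref{cod2}. Suppose $|\cod(G|G')|=1$ but $c(G)\geq 3$, so $G_3\neq 1$. The first observation is that the hypothesis passes to quotients that stay nonabelian. If $N\trianglelefteq G$ and $G/N$ is nonabelian, then every $\chi\in\Irr(G/N|(G/N)')$ is a nonlinear character of $G$. Its codegree computed in $G/N$ equals its codegree in $G$, since $\ker\chi\supseteq N$ and $|G/N:\ker\chi/N|=|G:\ker\chi|$. Hence $\cod(G/N|(G/N)')\subseteq\cod(G|G')$, and this set is nonempty, so $|\cod(G/N|(G/N)')|=1$.

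Next we choose a normal subgroup $N$ maximal subject to $c(G/N)\geq 3$, that is, $(G/N)_3\neq 1$. Since $G_3\neq 1$, the choice $N=1$ qualifies, so such an $N$ exists. Put $\bar G=G/N$; it has class at least $3$ and satisfies $|\cod(\bar G|\bar G')|=1$. By the maximality of $N$, every nontrivial normal subgroup $\bar M$ of $\bar G$ gives $c(\bar G/\bar M)\leq 2$, so $\bar G_3\leq\bar M$. In a $p$-group every nontrivial normal subgroup meets $Z(\bar G)$ nontrivially, so it contains a central subgroup of order $p$. It follows that $\bar G_3$ lies in every minimal normal subgroup, and these are exactly the central subgroups of order $p$.

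If $Z(\bar G)$ were noncyclic, it would contain two distinct subgroups of order $p$. Their intersection is trivial and would have to contain $\bar G_3$, contradicting $\bar G_3\neq 1$. So $Z(\bar G)$ is cyclic with a unique minimal normal subgroup $\bar M$, of order $p$. Since $1\neq\bar G_3\leq\bar M$, we get $|\bar G_3|=p$. Now Lemma~\ref{cod2} applies to $\bar G$ and gives $|\cod(\bar G|\bar G')|\geq 2$, which contradicts the previous paragraph. Therefore $c(G)\leq 2$. Finally, a group satisfying $|\cod(G|G')|=1$ has a nonlinear character, so $G$ is nonabelian and $c(G)=2$.

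The only delicate point is the reduction step. There we must make sure that the quotient keeps the one-codegree property, which requires it to be nonabelian so that the codegree set is nonempty. We must also check that maximality forces both $|\bar G_3|=p$ and a cyclic center. Both facts come from the standard result that nontrivial normal subgroups of $p$-groups meet the center nontrivially.
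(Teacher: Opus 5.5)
Your proof is correct and follows the same strategy as the paper: pass to a quotient that still has exactly one nonlinear codegree but satisfies $|G_3|=p$ and has cyclic center, then invoke Lemma~\ref{cod2}. The only difference is in how you produce that quotient. The paper takes a minimal-order counterexample and uses a faithful irreducible character whose kernel contains a $G$-invariant maximal subgroup of $G_3$; this implicitly requires choosing $\chi$ with $G_3\not\leq\ker\chi$. You instead take $N$ maximal with $G_3\not\leq N$ and read off $|\bar G_3|=p$ and the cyclicity of $Z(\bar G)$ from the minimal normal subgroups, which makes that step fully explicit.
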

\begin{proof}
	Let G be a counterexample of minimal order. Let $N$ be a maximal sugroups of $G_3$ and $\chi\in \Irr(G/N)$ be nonlinear. Then $$|\cod(G/\ker(\chi)N| (G/\ker(\chi)N)')|=1.$$ Thus $\ker(\chi)N=1$. It follows that $|G_3|=p$ and $Z(G)$ is cyclic. By Lemma \ref{cod2}, $|\cod(G| G')|\geq 2$. This completes the proof.
\end{proof}

\begin{lemma}\label{ele}
	Let $G$ be a $p$-group such that $|\cod(G| G')|=1$. Then $G'$ is elementary abelian.
\end{lemma}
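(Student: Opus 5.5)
By Corollary \ref{cG2}, $c(G)=2$, so $G'\le Z(G)$ and $G'$ is abelian. For $x\in G$ and $y\in G'$ we then have $[x,g]^p=[x^p,g]$ and $[x,g]\in Z(G)$. It therefore suffices to show $\mho_1(G')=1$. I would argue by minimal counterexample, exactly as in the proof of Corollary \ref{cG2}.

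\emph{Reduction.} The hypothesis $|\cod(G|G')|=1$ passes to nonabelian quotients: for $N\trianglelefteq G$ with $G/N$ nonabelian, $\cod(G/N|(G/N)')\subseteq \cod(G|G')$. Let $G$ be a counterexample of minimal order. Then $\mho_1(G')\neq 1$. Choose $N\le \mho_1(G')$ with $|\mho_1(G')/N|=p$ (more generally, any normal $N$ with $\mho_1(G'/N)\ne 1$).

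If $N\ne 1$, then $G/N$ is nonabelian and $\mho_1((G/N)')=\mho_1(G')N/N\ne 1$, so $G/N$ is a smaller counterexample, a contradiction. Hence $|\mho_1(G')|=p$. The same argument with the kernel of any nonlinear $\chi$ whose kernel misses $\mho_1(G')$ forces $Z(G)$ to be cyclic and $G$ to have a faithful irreducible character $\chi$. So we reduce to the case where $Z(G)$ is cyclic, $c(G)=2$, and $\mho_1(G')$ is the unique minimal normal subgroup of $G$.

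\emph{Comparing codegrees.} Since $G/Z(G)$ is abelian and $Z(\chi)=Z(G)$, Lemma \ref{231} gives $\chi(1)^2=|G:Z(G)|$, hence
\[
\cod(\chi)^2=|G|\,|Z(G)|.
\]
Now let $\bar G=G/\mho_1(G')$, which has elementary abelian derived subgroup $\bar G'$, and take a nonlinear $\psi\in\Irr(\bar G)$ with kernel $K\ge \mho_1(G')$. As in Step 2 of Lemma \ref{cod2}, $Z(\psi)/K$ is cyclic, say generated by $wK$. Here $[w,g]\in \bar K\cap\bar G'$ has order at most $p$, so $w^p$ is central in $\bar G$. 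Lemma \ref{231} gives $\cod(\psi)^2=|G:K|\,|Z(\psi):K|$.

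We want $|Z(\psi):K|<|K|\,|Z(G)|$, i.e. $|Z(\psi)|<|K|^2|Z(G)|$. The natural bound is $|Z(\psi):K|\le p\cdot \exp$ of the image of $Z(\bar G)$ in $G/K$. I expect this to be compared with $|Z(G)|$ via the map $x\mapsto x^p$. The relation $[x^p,g]=[x,g]^p$ shows that the $p$-th power of any $x$ with $\bar x\in Z(\bar G)$ lies in $Z(G)$. Hence $\exp(Z(\bar G))\le p\cdot\exp(Z(G)\mho_1(G')/\mho_1(G'))$, and $Z(G)$ is cyclic. Combining these should give $\cod(\psi)^2\le |G|\,|Z(G)|/p$, which is strictly smaller than $\cod(\chi)^2$. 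That yields $|\cod(G|G')|\ge 2$, a contradiction.

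\emph{Main obstacle.} The delicate point is the exponent bookkeeping in this last comparison: showing $|Z(\psi):K|\le |K|\,|Z(G)|/p$, where the $p$-power map from $Z(\bar G)$ into $Z(G)$ has to absorb exactly one factor of $p$. If the direct estimate fails, I would first try choosing $\psi$ with $K\supseteq Z(G)$ maximal. Alternatively, I would exploit that $x\mapsto [x,g]$ has image of exponent $p^2$ for some $x$ (since $\mho_1(G')\neq1$), to obtain an element $x$ with $x^p\notin Z(G)$ but $\bar x\in Z(\bar G)$. This would give the missing factor of $p$.
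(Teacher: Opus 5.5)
Your reduction is correct and is essentially the paper's. Minimality gives $|\mho_1(G')|=p$, a faithful $\chi$, and cyclic $Z(G)$, and $\cod(\chi)^2=|G|\,|Z(G)|$ is right. The gap is the decisive estimate, which you only announce.

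Taken literally, your ingredients do not give it. Your first bound is a factor $p$ times the exponent of the image of $Z(\bar G)$ in $G/K$. Combined with $\exp(Z(\bar G))\le p\cdot\exp(Z(G)/\mho_1(G'))=|Z(G)|$, this yields only $|Z(\psi):K|\le p\,|Z(G)|$. That gives $\cod(\psi)^2\le p\,|G|\,|Z(G)|/|K|\le |G|\,|Z(G)|$, which is not strict when $|K|=p$. You cannot avoid that case: if $Z(\bar G)$ is cyclic, every nonlinear $\psi\in\Irr(\bar G)$ is faithful on $\bar G$.

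Neither fallback rescues this. No nonlinear $\psi$ has $K\supseteq Z(G)$, because $G'\le Z(G)$. And no $x$ with $\bar x\in Z(\bar G)$ can have $x^p\notin Z(G)$, by your own remark that $[x,g]\in\mho_1(G')$ then has order at most $p$.

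The missing observation is that $G'\le Z(G)$ is cyclic, so $|\mho_1(G')|=p$ forces $|G'|=p^2$ and $|\bar G'|=p$. Since $\psi$ is nonlinear, $\bar G'\not\le\bar K$, hence $\bar K\cap\bar G'=1$. So $\bar w$ itself, not merely $\bar w^{\,p}$, is central in $\bar G$. This removes your extra factor $p$: $|Z(\psi):K|=o(wK)\le o(\bar w)\le\exp(Z(\bar G))\le|Z(G)|$.

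The paper makes the same point inside $G$. There $[w,g]\in K\cap G'=\mho_1(G')$ has order $p$, so $w^p\in Z(G)$. Because $\mho_1(G')\le Z(G)\cap K$ and $Z(G)$ is cyclic, $o(w^pK)\le|Z(G)|/p$. Either way $\cod(\psi)^2\le|G:K|\,|Z(G)|\le|G|\,|Z(G)|/p<\cod(\chi)^2$, which is the contradiction you wanted.
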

\begin{proof}
By the Corollary \ref{cG2}, $c(G)=2$. Assume that $G$ is a counterexample of minimal order. It is clearly that $\exp(G')=p^2$. Since 
\[ G\lesssim \times_{\chi\in \Irr(G)} G/\ker(\chi),
\] there is a $\chi\in \Irr(G)$ such that $|G'\ker(\chi)/\ker(\chi)|=p^2$. From the minimality of $G$, we have $\ker(\chi)=1$. Thus $|G'|=p^2$ and $Z(G)$ is cyclic.

Let $N$ be the maximal subgroup of $G'$ and $\psi\in\Irr(G/N)$ be nonlinear. By Lemma \ref{231}, $\psi(1)^2=|G:Z(\psi)|$. Let $K=\ker(\psi)$. Then 
\[ \cod(\psi)^2=\frac{|G:K|^2}{\psi(1)^2}=|G:K||Z(\psi):K|.
\]
Since $Z(\psi)/K$ is cyclic, there exists $w\in Z(\psi)$ such that
$Z(\psi)=\langle w\rangle K$. For every $g\in G$, we have
$[w,g]\in K\cap G'=N$, and so $[w^p,g]=[w,g]^p=1$. Thus $w^p\in Z(G)$. As
$N\leq Z(G)\cap K$, the cyclic group $Z(G)K/K\cong Z(G)/(Z(G)\cap K)$ has
order at most $|Z(G)|/p$. Therefore
\[
|Z(\psi):K|=o(wK)\leq p\,o(w^pK)\leq p\cdot\frac{|Z(G)|}{p}=|Z(G)| .
\]
By Lemma \ref{230}, we have
\[ \cod(\chi)^2=\frac{|G|^2}{\chi(1)^2}\geq |G|\,|Z(G)|.
\]
Consequncely,
\[
\cod(\psi)^2\leq |G:K|\,|Z(G)|=\frac{|G|\,|Z(G)|}{|K|}
\leq \frac{|G|\,|Z(G)|}{p}<\cod(\chi)^2,
\]
which contradicts $|\cod(G| G')|=1$.
\end{proof}

\begin{lemma}\label{G'=p}
	Let $G$ be a $p$-group with $|G'|=p$, and let $\exp(Z(G))=p^{e}$. Then
	\[
	\cod(G|G')=\bigl\{\,|G:Z(G)|^{1/2}\,o(\lambda)\bigm|
	\lambda\in\Irr(Z(G)),\ G'\not\leq\ker\lambda\,\bigr\}.
	\]
	Consequently, $|\cod(G|G')|=1$ if and only if $G'\leq\mho_{e-1}(Z(G))$. In this case $\cod(G|G')=\{|G:Z(G)|^{1/2}p^{e}\}$.
\end{lemma}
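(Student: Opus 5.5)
We will describe $\Irr(G|G')$ explicitly using the fact that $G$ has class $2$ with $|G'|=p$, and then read off the codegrees. Put $Z=Z(G)$. Since $G'\le Z$ and $|G'|=p$, the commutator map induces a nondegenerate alternating form $G/Z\times G/Z\to G'$. For a nonlinear $\chi\in\Irr(G)$ we have $G'\not\le\ker\chi$, so $\chi$ restricted to $Z$ is $\chi(1)\lambda$ for some $\lambda\in\Irr(Z)$ with $G'\not\le\ker\lambda$. We first show $Z(\chi)=Z$. Indeed, if $g\in Z(\chi)$, then $[g,x]\in\ker\chi$ for all $x\in G$, because $Z(\chi)/\ker\chi$ is central in $G/\ker\chi$. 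But $\ker\chi\cap G'=1$, so $g\in Z$. Hence $G/Z(\chi)$ is abelian, and Lemma~\ref{231} gives $\chi(1)^2=|G:Z|$. Conversely, for each $\lambda\in\Irr(Z)$ with $G'\not\le\ker\lambda$ there is at least one $\chi$ lying over $\lambda$, and any such $\chi$ is nonlinear. So the characters in $\Irr(G|G')$ are parametrized, up to the ones over a given $\lambda$, by exactly these $\lambda$.

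Next we compute the kernel. Since $\chi$ vanishes off $Z(\chi)=Z$ (equality case of Lemma~\ref{230}), we have $\ker\chi=\ker\lambda\le Z$. Therefore
\[
\cod(\chi)=\frac{|G:\ker\lambda|}{|G:Z|^{1/2}}=|G:Z|^{1/2}\,|Z:\ker\lambda|=|G:Z|^{1/2}\,o(\lambda),
\]
where $|Z:\ker\lambda|=o(\lambda)$ because $Z/\ker\lambda$ is cyclic. This gives the displayed formula for $\cod(G|G')$.

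For the consequence, we must decide when all $\lambda\in\Irr(Z)$ with $\lambda|_{G'}\ne 1$ have the same order. This is the main step, and it is handled by duality of the finite abelian group $Z$. The maximum possible order of such a $\lambda$ is $p^e$, attained by taking a product of a faithful-on-$G'$ character with one of order $p^e$ if needed. So $|\cod(G|G')|=1$ exactly when every $\lambda$ nontrivial on $G'$ has order $p^e$, that is, when every $\lambda$ with $o(\lambda)\le p^{e-1}$ is trivial on $G'$. The characters of order dividing $p^{e-1}$ are precisely those trivial on $\mho_{e-1}(Z)$, i.e.\ the group $\Irr(Z/\mho_{e-1}(Z))$. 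The intersection of their kernels is $\mho_{e-1}(Z)$. Hence the condition holds if and only if $G'\le\mho_{e-1}(Z)$.

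In that case every $\lambda$ in question has order $p^e$, which yields $\cod(G|G')=\{|G:Z|^{1/2}p^e\}$. Conversely, if $G'\not\le\mho_{e-1}(Z)$, choose $\lambda$ trivial on $\mho_{e-1}(Z)$ but not on $G'$; then $o(\lambda)<p^e$. Multiplying $\lambda$ by a character $\mu$ of order $p^e$ that is trivial on $G'$ gives a second codegree. Such a $\mu$ exists because $\mho_{e-1}(Z)\not\le G'$ when $G'\not\le\mho_{e-1}(Z)$, since $|G'|=p$; alternatively, if $e=1$ the condition holds trivially. The product $\lambda\mu$ is still nontrivial on $G'$ and has order $p^e$, so $|\cod(G|G')|\ge 2$.
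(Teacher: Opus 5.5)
Your proof is correct and follows essentially the same route as the paper: show $\ker\chi=\ker\lambda$ and $\chi(1)^2=|G:Z(G)|$ for nonlinear $\chi$ lying over $\lambda$, then use duality on $Z(G)$, namely that some character of order $p^e$ is nontrivial on $G'$, and that the characters of order dividing $p^{e-1}$ have kernel intersection $\mho_{e-1}(Z(G))$. The differences are cosmetic: you get $Z(\chi)=Z(G)$ directly from $\ker\chi\cap G'=1$ and then apply Lemma~\ref{231}, whereas the paper first obtains $\chi(1)^2=|G:Z(G)|$ from Isaacs' Problem~2.13; and your final ``conversely'' paragraph is redundant given the equivalence you already proved, though it is correct.
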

\begin{proof}
Since $|G'|=p$, we have $G'\leq Z(G)$. Let $\chi\in \Irr(G)$ be nonlinear, and let $\lambda\in\Irr(Z(G))$ be such that $\chi_{Z(G)}=\chi(1)\lambda$. By Problrm 2.13 in \cite{isaacs1994character}, we have $\chi(1)^2=|G:Z(G)|$. It follows that $Z(\chi)=Z(G)$, and hence $\ker(\chi)=\ker(\lambda)$. Then we have

\[
\cod(\chi)=\frac{|G:Z(G)|\,|Z(G):\ker\lambda|}{\chi(1)}
=|G:Z(G)|^{1/2}\,o(\lambda).
\]
Conversely, if $\lambda\in\Irr(Z(G))$ and $G'\not\leq\ker\lambda$, then every
irreducible constituent $\chi$ of $\lambda^G$ satisfies
$\chi_{Z(G)}=\chi(1)\lambda$ and is nonlinear. This proves the displayed
formula.

Write $Y=\Irr(Z(G))$ and $Y_0=\{\lambda\in Y\mid G'\leq\ker\lambda\}$. If $G'\leq \mho_1(Z(G))$, there exists a
$x\in Z(G)$ such that $x^{p^{e-1}}$ generates $G'$. Then
$\lambda(x)^{p^{e-1}}=\lambda(x^{p^{e-1}})\neq 1$ for every
$\lambda\in Y\setminus Y_0$, and hence $o(\lambda)=p^e$. Conversely, assume
that all characters in $Y\setminus Y_0$ have the same order. Choose
$\lambda_0\in Y$ of order $p^e$ and $\mu\in Y\setminus Y_0$. If $o(\mu)<p^e$,
then one of $\lambda_0$ and $\lambda_0\mu$ lies in $Y\setminus Y_0$ and has
order $p^e$, a contradiction. Thus every $\lambda\in Y$ with
$\lambda^{p^{e-1}}=1$ lies in $Y_0$. Since
$\lambda^{p^{e-1}}(y)=\lambda(y^{p^{e-1}})$ for $y\in Z(H)$, these are exactly
the characters of $Z(H)$ whose kernels contain $\mho_{e-1}(Z(H))$. Hence $H'$
is contained in the intersection of their kernels, which is
$\mho_{e-1}(Z(H))$.
\end{proof}
\begin{proof}[Proof of Main theorem] 
	Assume that $\cod(G)=\{p^a\}$. By Corollary \ref{cG2} and Lemma \ref{ele}, we have that $c(G)=2$ and $G'$ is elementary. Let $N$ be a maximal subgroup of $G'$. Then $\cod(G/N)=\cod(G)=\{p^a\}$. By Lemma \ref{G'=p}, we obtain  (i) and (ii). Conversely, let $\chi\in \Irr(G)$ be nonlinear and $N=\ker\chi$. Since $c(G)=2$ and $G'$ is elementary, $|G'N/N|=p$. By Lemma \ref{G'=p}, $\cod(\chi)=p^{d_N+e_N}=p^a$. 

\end{proof}

\begin{cor}\label{cor:degrees}
Let $G$ be a $p$-group with $\cod(G|G')=\{p^a\}$, let $\exp(Z(G))=p^z$, and put $s=a-z$. Then $\cd(G)\subseteq\{1,p^{s-1},p^{s},p^{s+1}\}$ and $\cd(G)\neq\{1,p^{s+1}\}$.
		
\end{cor}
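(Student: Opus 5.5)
Throughout, fix a nonlinear $\chi\in\Irr(G)$ and put $K=\ker\chi$. The plan is to express $\chi(1)$ through the data $d_N,e_N$ of the Main theorem for a suitable maximal subgroup $N$ of $G'$. Then $\chi(1)=p^{a-e_N}$, and everything reduces to bounding $e_N$ in terms of $z$. By the Main theorem (with Corollary \ref{cG2} and Lemma \ref{ele}), $c(G)=2$ and $G'$ is elementary abelian.

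\emph{Step 1 (reduction).} Since $G/K$ has a faithful irreducible character, $Z(G/K)$ is cyclic. It contains $G'K/K$ because $c(G)=2$. Now $G'K/K$ is elementary abelian and nontrivial, since $\chi$ is nonlinear. Being also a subgroup of a cyclic group, it has order $p$. Hence $N:=G'\cap K$ is a maximal subgroup of $G'$, and $\chi\in\Irr(G/N)$ is nonlinear. Applying Lemma \ref{G'=p} to $G/N$, whose derived subgroup has order $p$, gives $\chi(1)^2=|G:R_N|=p^{2d_N}$. The Main theorem gives $d_N+e_N=a$, so $\chi(1)=p^{a-e_N}$. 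Conversely, for every maximal $N\le G'$, the group $G/N$ has a nonlinear character of degree $p^{d_N}$. Thus
\[
\cd(G)\setminus\{1\}=\{p^{a-e_N}\mid N \text{ maximal in } G'\}.
\]

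\emph{Step 2 (bounds $z-1\le e_N\le z+1$).} For the upper bound, take $x\in R_N$ and $g\in G$. Then $[x,g]\in G'$ and $G'$ is elementary abelian and central, so $[x^p,g]=[x,g]^p=1$. Hence $x^p\in Z(G)$, so $x^{p^{z+1}}=1$ and $\exp(R_N/N)\le p^{z+1}$. For the lower bound, we have $Z(G)N/N\le R_N/N$, and $Z(G)N/N\cong Z(G)/(Z(G)\cap N)$. Here $Z(G)\cap N$ is elementary abelian, so the quotient has exponent at least $p^{z-1}$. Thus $e_N\in\{z-1,z,z+1\}$, and Step 1 yields $\cd(G)\subseteq\{1,p^{s-1},p^s,p^{s+1}\}$ with $s=a-z$.

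\emph{Step 3 (excluding $\cd(G)=\{1,p^{s+1}\}$).} It suffices to find one maximal $N\le G'$ with $e_N\ge z$. This is the only slightly delicate point, and it amounts to a choice of $N$. Pick $y\in Z(G)$ of order $p^z$; note $z\ge1$ since $G'\le Z(G)$ is nontrivial. Put $u=y^{p^{z-1}}\ne1$. If $u\notin G'$, then $u\notin N$ for every maximal $N\le G'$. If $u\in G'$, choose a maximal subgroup $N$ of the elementary abelian group $G'$ with $u\notin N$. In either case $yN\in Z(G/N)$ has order $p^z$, so $e_N\ge z$. The corresponding nonlinear character has degree $p^{a-e_N}\le p^{s}$, and therefore $\cd(G)\ne\{1,p^{s+1}\}$.
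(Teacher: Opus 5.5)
Your proof is correct and follows essentially the same route as the paper: you take $N=\ker\chi\cap G'$, which has index $p$ in $G'$, read off $\chi(1)=p^{a-e_N}$ from Lemma~\ref{G'=p} and the Main theorem, and bound $e_N$ between $z-1$ and $z+1$ using $x^p\in Z(G)$ together with the fact that $N$ is elementary abelian. The only difference is cosmetic: to exclude $\cd(G)=\{1,p^{s+1}\}$ you directly exhibit a maximal $N\le G'$ with $y^{p^{z-1}}\notin N$, whereas the paper argues by contradiction that $\mho_{z-1}(Z(G))$ would lie in every maximal subgroup of $G'$ and hence be trivial; both arguments rest on the intersection of the maximal subgroups of the elementary abelian group $G'$ being trivial.
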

\begin{proof}
By the main theorem, $c(G)=2$ and $G'$ is elementary abelian.
Let $\chi\in\Irr(G)$ be nonlinear, and put $N=\ker\chi\cap G'$.
Since $G'/N\cong G'\ker\chi/\ker\chi\leq Z(G/\ker\chi)$,
and $Z(G/\ker\chi)$ is cyclic, the nontrivial elementary abelian
group $G'/N$ has order $p$. By Lemma~\ref{G'=p}, we have
\[
p^a=\cod(\chi)=\chi(1)\exp(Z(G/N)).
\]
Since $N\leq Z(G)$ has exponent at most $p$ and
$Z(G)/N\leq Z(G/N)$, we obtain $p^{z-1}\leq\exp(Z(G)/N)\leq\exp(Z(G/N)).$
On the other hand, $[x^p,g]=[x,g]^p=1$ for all $x,g\in G$,
so $x^p\in Z(G)$ for every $x\in G$. Consequently,
$\exp(Z(G/N))\leq p^{z+1}$. It follows that $ p^{a-z-1}\leq\chi(1)\leq p^{a-z+1}$. Hence, $\cd(G)\subseteq\{1,p^{s-1},p^s,p^{s+1}\}$.

Suppose that $\cd(G)=\{1,p^{s+1}\}$. Then $e_N=z-1$ for every maximal
subgroup $N$ of $G'$, so that $\exp(Z(G)/N)\leq p^{z-1}$, that is,
$\mho_{z-1}(Z(G))\leq N$. As $G'$ is elementary abelian, the intersection of
its maximal subgroups is trivial, and hence $\mho_{z-1}(Z(G))=1$. This
contradicts $\exp(Z(G))=p^z$.
\end{proof}

\begin{rem}\label{rem:patterns}
	Let $G$, $a$, $z$ and $s$ be as in Corollary~\ref{cor:degrees}. By
	Corollary~\ref{cor:degrees}, the set $\cd(G)\setminus\{1\}$ is one of the six
	nonempty subsets of $\{p^{s-1},p^s,p^{s+1}\}$ different from $\{p^{s+1}\}$.
	For $p=2$, four of these patterns are realized by the following groups
	of the SmallGroups library (identified by GAP):
	\begin{itemize}[label={}]
		\item $\cd(G)\setminus\{1\}=\{p^s\}$: $D_8$;
		\item $\cd(G)\setminus\{1\}=\{p^{s-1}\}$: $\mathrm{SmallGroup}(32,33)$;
		\item $\cd(G)\setminus\{1\}=\{p^{s-1},p^s\}$: $\mathrm{SmallGroup}(64,219)$;
		\item $\cd(G)\setminus\{1\}=\{p^{s},p^{s+1}\}$: $\mathrm{SmallGroup}(128,2213)$.
	\end{itemize}
	
	The bound $|\cd(G)|\leq 4$ is attained. For example, let $G$ have generators
	$x_1$, $y_1$, $x_2$, $y_2$, $x_3$, $y_3$, $c$ and $n$, subject to the relations
	$$
	\begin{gathered}
		c^4=n^2=1,\qquad x_1^2=c,\qquad
		y_1^2=x_2^2=y_2^2=x_3^2=y_3^2=1,\\
		[x_1,y_1]=[x_2,y_2]=n,\qquad [x_3,y_3]=nc^2,
	\end{gathered}
	$$
	with all other commutators between generators trivial. Then $|G|=2^9$,
	$Z(G)=\langle c,n\rangle\cong C_4\times C_2$, and
	$G'=\langle c^2,n\rangle\cong C_2\times C_2$. Put $A=Z(G)$.
	For the three maximal subgroups $N$ of $G'$, the notation of
	Main theorem gives
	$$
	\begin{array}{c|c|c|c|c}
		N & R_N & d_N & e_N & x_N\\ \hline
		\langle c^2\rangle & A & 3 & 1 & n\\
		\langle n\rangle & \langle A,x_1,y_1,x_2,y_2\rangle & 1 & 3 & x_1\\
		\langle nc^2\rangle & \langle A,x_3,y_3\rangle & 2 & 2 & c
	\end{array}
	$$
	In each row, $x_N^{2^{e_N-1}}\in G'\setminus N$ and $d_N+e_N=4$.
	Thus Main theorem yields
	$$
	\cod(G\mid G')=\{16\},\qquad \cd(G)=\{1,2,4,8\}.
	$$
	Here $a=4$, $z=2$ and $s=2$, so all three possible nonlinear degrees
	$2^{s-1},2^s,2^{s+1}$ occur.
\end{rem}

\end{document}